\newtheorem{theorem}{Theorem}[section]
\newtheorem{corollary}[theorem]{Corollary}
\newtheorem{proposition}[theorem]{Proposition}
\theoremstyle{definition}
\newtheorem{definition}[theorem]{Definition}
\theoremstyle{definition}
\newtheorem{remark}[theorem]{Remark}
\theoremstyle{definition}
\newtheorem{example}[theorem]{Example}
\numberwithin{equation}{section}
\begin{document}
\title{ $(h,k)$-Dichotomy and  Lyapunov Type Norms}
\author[V. Crai]{Violeta Crai}
\address{Department of Mathematics \\
	West University of Timi\c soara \\
	V. P\^ arvan Blvd. No. 4 \\
	300223 Timi\c soara \\
	Romania} \email{vio.terlea@gmail.com}
\author[M. Megan]{Mihail Megan}
\address{Academy of Romanian Scientists,\\ Independen\c tei 54,\\ 050094 Bucure\c sti,\\  West University of Timi\c soara, \\ Faculty of Mathematics and Computer Sciences\\ V. P\^ arvan Blv. No. 4,\\ 300223 Timi\c soara,\\ Romania}
\email{mihail.megan@e-uvt.ro}
\date{ }
\maketitle
\subjclass{34D05, 34D09}
\keywords{Evolution operators, $(h,k)$-dichotomy}
\begin{abstract}
	The paper considers a general concept of nonuniform $(h,k)$-dichotomy for evolution operators in Banach spaces. Two characterizations of this concept in terms of some families of norms compatible with the dichotomy projectors are given.
	\end{abstract}

\section{Introduction}
The notion of exponential dichotomy was introduced by O. Perron in \cite{peron} for differential equations and by T. Li in \cite{li} for difference equation. The concept plays a central role in the stability theory of differential equations, discrete dynamical systems, delay evolution equations, dynamical equations on time scales, impulsive equations, stochastic processes  and many other domains. The exponential dichotomy property for linear differential equations has gained prominence since the appearance of two fundamental monographs due to Ju.L. Dalecki and  M.G. Krein \cite{daletchi} and J.L. Massera and J.J. Sch\"{a}ffer \cite{masera}. These were followed by the book of W.A. Coppel \cite{copel}, who synthesized and improved the results that existed in the literature up to 1978. Dichotomies have been the subject of extensive research over the last years, leading to exciting new results. For more details, we refer the reader \cite{lupa}, \cite{megan2}, \cite{sasu}.

A natural generalization of both the uniform and
nonuniform, exponential and polynomial dichotomy is successfully modeled by the concept of $(h,k)$-dichotomy, where $h,k$ are growth rates (nondecreasing functions that go to  infinity). The concept was introduced for the first time in the literature by M. Pinto in \cite{pinto1} and was intensively studied in the last years (see for example \cite{Besi4}, \cite{monteola1}, \cite{megan}).

The aim of this paper is to obtain two characterizations of the notion of $(h,k)-$ dichotomy. In order to do that, we introduce the concept of a Lyapunov type family of norms $\mathcal{N}=\{\|\cdot\|_t, t\geq0\}$ compatible with the family of projectors $P$. We give two examples of these kind of norms: the first example is based on the assumption of dichotomy and the second yields by the $(h,k)$-growth property. Using these norms we obtain characterization of the concept of $(h,k)$-dichotomy. A surprising result is the equivalence between the nonuniform dichotomy property and a certain type of uniform dichotomy with respect to a Lyapunov type family of norms. We recall that, in general, the two concepts are distinguished. 

It is difficult to indicate an original reference for considering Lyapunov type families of norms in the classical uniform theory, in the nonuniform theory it first occurred in Pesin's work on nonuniform hyperbolicity and smooth ergodic theory \cite{pesin}, \cite{pesin1}. Our characterizations, using the family of norms, are inspired by the works of L. Barreira, D. Dragi\v cevi\'c and C. Valls for exponential dichotomy( see for example \cite{ba0}, \cite{ba1}, \cite{ba3}).

\section{Preliminaries}
We denote by $X$ a Banach space, $\mathcal{B}(X)$ the Banach algebra of all linear and bounded operators on $X$ and $\Delta=\{(t,s)\in\mathbb{R}_+^2,t\geq s\geq 0\}$.
\begin{definition}
	An application $U:\Delta\to\mathcal{B}(X)$ is called \textit{evolution operator} on $X$ if:
	\begin{itemize}
		\item[$(e_1)$]$U(t,t)=I$(the identity operator on $X$)
		\item[$(e_2)$]$U(t,t_0)=U(t,s)U(s,t_0)$, for all $(t,s),(s,t_0)\in\Delta$ (the evolution property)
	\end{itemize}
\end{definition}
\begin{example}
	If $X=\mathbb{R}$ and $u:\mathbb{R}_+\to\mathbb{R}^*_+$ then
	\begin{align*}
	U(t,s)=\frac{u(s)\ln u(s)}{u(t)\ln u(t)},
	\end{align*}
	for all $(t,s)\in\Delta$ is an evolution operator on $X$.
\end{example}
\begin{definition}
	A map $P:\mathbb{R}_+\to\mathcal{B}(X)$ is called \textit{a family of projectors} on $X$ if 
	$$P^2(t)=P(t),\forall t\geq 0$$
\end{definition}
\begin{example}\label{proiectori}
	If $P:\mathbb{R}_+\to\mathcal{B}(X) $ is a family of projectors on $X$ then the map $Q:\mathbb{R}_+\to\mathcal{B}(X)$ define by:
	\begin{equation*}
	Q(t)=I-P(t),\forall t\geq 0
	\end{equation*}
	is also a family of projectors on $X$, which is called the \textit{complementary family} of $P$.
\end{example}
\begin{definition}
	A family of projectors $P$ is called
	\textit{invariant} for the evolution operator $U:\Delta\to\mathcal{B}(X)$ if:
	\begin{align*}
	U(t,s)P(s)x=P(t)U(t,s)x, 
	\end{align*}
	for all $(t,s,x)\in \Delta\times X$.
\end{definition}
\begin{remark}\label{proiectori invarianti}
	If a family of projectors $P$ is invariant for the evolution operator $U:\Delta\to\mathcal{B}(X)$ then its complementary $Q$ is also invariant for $U$.
\end{remark}
Let $U:\Delta\to\mathcal{B}(X)$ be an evolution operator on $X$.
\begin{definition}
	We say that a family of projectors $P:\mathbb{R}_+\to\mathcal{B}(X)$ is
	\textit{compatible} with the evolution operator $U:\Delta\to\mathcal{B}(X)$ if it is invariant for $U$ and for every $(t,s)\in \Delta$ the restriction of $U(t,s)$ to $Ker P(s)$ is an isomorphism from $Ker P(s)$ to $Ker P(t)$.
	
\end{definition}
\begin{remark}\label{proiectorstrong}
	If the family of projectors $P:\mathbb{R}_+\to\mathcal{B}(X)$ is compatible with  the evolution operator $U:\Delta\to\mathcal{B}(X)$ and $Q:\mathbb{R}_+\to\mathcal{B}(X)$ is the complementary family of projectors of $P$, then there exists a map $V:\Delta\to\mathcal{B}(X)$ which is an isomorphism from $Ker P(t)=Range Q(t)$ to $Ker P(s)=Range Q(s)$ for all $(t,s)\in\Delta$. The isomorphism $V$ satisfies the following:
	\begin{itemize}
		\item [$v_1)$] $U(t,s)V(t,s)Q(t)x=Q(t)x$
		\item[$v_2)$] $V(t,s)U(t,s)Q(s)x=Q(s)x$
		\item[$v_3)$]$V(t,t_0)=V(s,t_0)V(t,s)$
		\item[$v_4)$]$V(t,s)Q(t)=Q(s)V(t,s)Q(t)$
	\end{itemize}
	for all $(t,s),(s,t_0)\in \Delta$ and $x\in X$.
\end{remark}
\begin{proof}
	See \cite{lupa}.
\end{proof}
\begin{definition}
	We say that a nondecreasing map $h:\mathbb{R}_+\to[1,\infty)$ is a \textit{ growth rate} if 
	\begin{align*}
	\lim\limits_{t\to\infty}h(t)=\infty
	\end{align*}
\end{definition}
\begin{example}\label{ex-rate}
	It is obvious that the functions $h_1,h_2,h_3:\mathbb{R}_+\to[1,\infty)$ defined by:
	\begin{align*}
	h_1(t)=e^t,& & h_2(t)=t+1, & &h_3(t)=(t+1)\ln(t+e),
	\end{align*}
	are growth rates.
\end{example}
Let  $h,k:\mathbb{R}_+\to[1,\infty)$ be two two growth rates and let $P:\mathbb{R}_+\to\mathcal{B}(X)$ be a family of projectors which is invariant for  the evolution operator $U:\Delta\to\mathcal{B}(X)$.
\begin{definition}\label{dicho}
	We say that the pair  $(U,P)$ is (nonuniform)-\textit{ $(h,k)$-dichotomic} if	there exists a nondecreasing map $N:\mathbb{R}_+\to[1,\infty)$ such that
	\begin{itemize}
		\item[($hd_1$)] $h(t)\|U(t,s)P(s)x\|\leq N(s)h(s) \|P(s)x\|$
		\item[($kd_2$)] $k(t)\|Q(s)x\|\leq N(t)k(s)\|U(t,s)Q(s)x\|$,
	\end{itemize}
	for all $(t,s,x)\in\Delta\times X$, where $Q$ is the complementary family of $P$.
\end{definition}
\begin{remark}
	As particularly cases of $(h,k)$-dichotomy we have:
	\begin{itemize}
		\item[1.] If N is a constant, we obtain the \textit{uniform - $(h,k)$- dichotomy} property, denoted by (u-$(h,k)$-d).
		\item[2.] Taking $h(t)=e^{ \alpha t},k(t)=e^{ \beta t}$ with $\alpha,\beta>0$ in Definition \ref{dicho}, it results the \textit{exponential dichotomy} concept, denoted by (e.d.).
		\item[4.] For $h(t)=(t+1)^\alpha,k(t)=(t+1)^\beta,\alpha,\beta>0$ in Definition \ref{dicho} we obtain the \textit{polynomial dichotomy} property, denoted by (p.d.).
	\end{itemize}
\end{remark}
\begin{remark}
If the pair $(U,P)$ is uniform -$(h,k)$- dichotomic, then it is also nonuniform -$(h,k)$- dichotomic. In general, the reverse of this statement is not valid. The following is an example of a nonuniform -$(h,k)$- dichotomy that is not uniform.	
\end{remark}
\begin{example}
	Let $h,k:\mathbb{R}_+\to\mathcal{B}(X)$ be two growth rates. On $X=\mathbb{R}^2$ endowed with the norm $\|x\|=\max\{|x_1|,|x_2|\}$, we consider the functions $P,Q:\mathbb{R}_+\to\mathcal{B}(X)$ given by
	\begin{equation}
	P(t)(x_1,x_2)=(x_1-x_2e^t,0)
	\end{equation}
	and 
	\begin{equation}
	Q(t)(x_1,x_2)=(x_2e^t,x_2).
	\end{equation}
	It is obvious that $P,Q$ are complementary and that:
	\begin{align}
	P(t)P(s)x&=P(s)x.\label{rel4}\\
	Q(t)Q(s)x&=Q(t)x,\label{rel3}
	\end{align}
	for all $(t,s,x)\in\Delta$.\\
	Further, we consider the evolution operator $U:\Delta\to\mathcal{B}(X)$ given by:
	\begin{align}
	U(t,s)= \frac{h(s)}{h(t)}\frac{(s+1)\ln(s+e) }{(t+1)\ln (t+e)}P(s)+ \frac{k(t)}{k(s)}\frac{(s+1)\ln(s+e) }{(t+1)\ln (t+e)}Q(s), 
	\end{align}for all $(t,s)\in\Delta$.\\
	 By  (\ref{rel4}) and (\ref{rel3}) we obtain that the functions $P,Q:\mathbb{R}_+\to\mathcal{B}(X)$ are complementary families of projectors invariant to the evolution operator $U:\Delta\to\mathcal{B}(X).$
	
	In the following, we will prove that the pair $(U,P)$ is nonuniform -$(h,k)$- dichotomic.	We have that:
	\begin{align*}
	\|U(t,s)P(s)x\|&=\frac{h(s)}{h(t)}\frac{(s+1)\ln(s+e)}{(t+1)\ln (t+e)}\|P(s)x\|\leq \frac{h(s)}{h(t)}(s+1)\ln (s+e)\|P(s)x\|,
	\end{align*}
	and also that:
	\begin{align*}
	\|U(t,s)Q(s)x\|&=\frac{k(t)}{k(s)}\frac{(s+1)\ln(s+e) }{(t+1)\ln (t+e)}\|Q(s)x\|\geq \frac{k(t)}{k(s)} \frac{1}{(t+1)\ln (t+e)}\|Q(s)x\|
	\end{align*}
	for all $(t,s,x)\in\Delta\times X.$

	It follows that there exists a nondecreasing function $N:\mathbb{R}_+\to[1,\infty), N(t)=(t+1)\ln (t+e)$ such that the pair $(U,P)$ is nonuniform -$(h,k)$- dichotomic.
	
	We assume that the the pair $(U,P)$ is also uniform -$(h,k)$- dichotomic and we have that there exists a constant $N\geq 1$ such that :
	\begin{align*}
	\|U(t,s)Q(s)x\|&=\frac{k(t)}{k(s)}\frac{(s+1)\ln(s+e)}{(t+1)\ln (t+e)}\|Q(s)x\|\geq \frac{1}{N}\frac{k(t)}{k(s)}\|Q(s)x\|
	\end{align*}
	Taking  $s=0$ we have:
	\begin{equation*}
	\frac{1}{(t+1)\ln (t+e)}\geq \frac{1}{N}
	\end{equation*}
	When $t\to\infty$ we obtain a contradiction. It follows that the pair $(U,P)$ is not uniform -$(h,k)$- dichotomic.
\end{example}

\begin{definition}\label{growth}
	We say that the pair $(U,P)$ has \textit{ $(h,k)$-growth} if	there exists a nondecreasing map $M:\mathbb{R}_+\to[1,\infty)$ such that:
	\begin{itemize}
		\item[($hg_1$)] $h(s)\|U(t,s)P(s)x\|\leq M(s)h(t) \|P(s)x\|$
		\item[($kg_2$)] $k(s)\|Q(s)x\|\leq M(t)k(t)\|U(t,s)Q(s)x\|$,
	\end{itemize}
	for all $(t,s,x)\in\Delta\times X$.
\end{definition}
\begin{remark}
	As particularly cases of $(h,k)-$ growth we have:
	\begin{itemize}
		\item[1.] Taking $h(t)=e^{ \alpha t},k(t)=e^{ \beta t}$ with $\alpha,\beta>0$ in Definition \ref{growth} , it results the \textit{exponential growth} concept, denoted by (e.g.).
		\item[2.] For $h(t)=(t+1)^\alpha,k(t)=(t+1)^\beta,\alpha,\beta>0$ in Definition \ref{growth}, we obtain the \textit{polynomial growth} property, denoted by (p.g.).
	\end{itemize}
\end{remark}
\begin{remark}\label{dicho-crestere}
	If the pair $(U,P)$ is $(h,k)$- dichotomic then it also has $(h,k)$- growth. The reverse is not always true, as seen from the following example.
\end{remark}
We denote by $\mathcal{G}$ the set of growth rates $h:\mathbb{R}_+\to[1,\infty)$ for which there exists a growth rate $g:\mathbb{R}_+\to[1,\infty)$ such that:
\begin{equation}\label{ex-dicho-crestere}
\frac{h^2(t)}{(t+1)\ln(t+e)}\geq g(t),\forall t\geq 0.
\end{equation}
We observe that the set $\mathcal{G}$ is not void, since the growth rate $h_3:\mathbb{R}_+\to[1,\infty)$ given by Example \ref{ex-rate} belong to $\mathcal{G}$.
\begin{example}
	We consider the Banach space $X=\mathbb{R}$, two growth rates $h,k:\mathbb{R}_+\to[1,\infty)$ with $h\in\mathcal{G}$ and a projectors family $P:\mathbb{R}_+\to\mathcal{B}(X)$ invariant to the evolution operator $U:\Delta\to\mathcal{B}(X)$ given by:
	\begin{align}
	U(t,s)=\frac{h(t)}{h(s)}\frac{(s+1)\ln(s+e)}{(t+1)\ln(t+e)}P(s)+\frac{k(s)}{k(t)}\frac{(s+1)\ln(s+e)}{(t+1)\ln(t+e)}Q(s),
	\end{align}
	for all $(t,s)\in \Delta$, where $Q$ is the complementary of $P$.
	
	It is a simple computation that there exists a nondecreasing function $M:\mathbb{R}_+\to[1,\infty)$, $M(t)=(t+1)\ln(t+e)$ such that the inequalities $(hg_1),(kg_2)$ are satisfied.
	
	We assume that the pair $(U,P)$ is also $(h,k)$- dichotomic. By Definition \ref{dicho} we have that there exists a nondecreasing function $N:\mathbb{R}_+\to[1,\infty)$ such that $(hd_1)$ and $(kd_2)$ take place.
	\begin{align*}
	\|U(t,s)P(s)x\|=\frac{h(t)}{h(s)}\frac{(s+1)\ln(s+e)}{(t+1)\ln(t+e)}\|P(s)x\|\leq N(s)\frac{h(s)}{h(t)}\|P(s)x\|,
	\end{align*}
	for all $(t,s,x)\in\Delta\times X$. Since $h\in \mathcal{G}$, we have that there exits a growth rate $g$ such that (\ref{ex-dicho-crestere}) take place. Taking  $s=0$ we obtain:
	\begin{equation*}
	g(t)\leq\frac{h^2(t)}{(t+1)\ln (t+e)}\leq N(0)
	\end{equation*}
 When $t\to\infty$ we obtain a contradiction .
\end{example}

In the particular case when the family of projectors $P$ is compatible with  the evolution operator $U$ we have:
\begin{proposition}\label{strong-dichotomy} If $P$ is compatible with  $U$ then
	the pair $(U,P)$ is $(h,k)$-dichotomic if and only if there exists a nondecreasing map $N:\mathbb{R}_+\to[1,\infty)$  such that:
	\begin{itemize}
		\item[($hd_1$')] $h(t)\|U(t,s)P(s)x\|\leq N(s)h(s) \|P(s)x\|$
		\item[($kd_2$')] $k(t)\|V(t,s)Q(t)x\|\leq N(t)k(s)\|Q(t)x\|$,
	\end{itemize}
	for all $(t,s,x)\in\Delta\times X.$
\end{proposition}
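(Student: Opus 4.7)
The strategy is to observe that $(hd_1)$ and $(hd_1{}')$ are literally the same condition, so the content of the proposition reduces to showing the equivalence of $(kd_2)$ and $(kd_2{}')$ under the hypothesis that $P$ is compatible with $U$. The bridge between the two is the isomorphism $V:\Delta\to\mathcal{B}(X)$ furnished by Remark \ref{proiectorstrong}, together with its identities $v_1$--$v_4$. The plan is to transport each inequality to the other by a well-chosen substitution of the vector variable, using $v_1$, $v_2$ and $v_4$ to rewrite $U(t,s)$ and $V(t,s)$ in terms of each other on the unstable subspaces.

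For the direct implication, starting from $(kd_2)$, I would set $y=V(t,s)Q(t)x$ and apply $(kd_2)$ with $x$ replaced by $y$. Property $v_4$ gives $y=Q(s)y$, so the left-hand side of $(kd_2)$ becomes $k(t)\|V(t,s)Q(t)x\|$, while $v_1$ yields $U(t,s)y=U(t,s)V(t,s)Q(t)x=Q(t)x$, turning the right-hand side into $N(t)k(s)\|Q(t)x\|$. This is exactly $(kd_2{}')$.

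For the converse, starting from $(kd_2{}')$, the natural substitution is $z=U(t,s)Q(s)x$. Invariance of $Q$ under $U$ (Remark \ref{proiectori invarianti}) gives $z=Q(t)z$, and property $v_2$ gives $V(t,s)z=V(t,s)U(t,s)Q(s)x=Q(s)x$. Applying $(kd_2{}')$ with $x$ replaced by $z$ therefore turns $k(t)\|V(t,s)Q(t)z\|$ into $k(t)\|Q(s)x\|$ on the left and $N(t)k(s)\|Q(t)z\|=N(t)k(s)\|U(t,s)Q(s)x\|$ on the right, yielding $(kd_2)$.

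There is no real obstacle here; the only thing to be careful about is bookkeeping with the projectors, namely verifying at each step that the vector one feeds into the hypothesis lies in the correct range so that the $Q$-factors collapse correctly. The compatibility assumption is used precisely to ensure that $V$ exists and that $v_1$, $v_2$, $v_4$ are available, with no further ingredients needed.
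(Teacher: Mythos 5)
Your argument is correct: the substitutions $y=V(t,s)Q(t)x$ (using $v_4$, $v_1$) and $z=U(t,s)Q(s)x$ (using invariance of $Q$ and $v_2$) do transport $(kd_2)$ and $(kd_2')$ into each other, and $(hd_1)$, $(hd_1')$ coincide. The paper itself only cites \cite{monteola1} for this proposition, but your method is exactly the one the paper uses for the analogous growth statement (Proposition \ref{crestere}), so this is essentially the same approach.
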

\begin{proof}
	See \cite{monteola1}.
\end{proof}
\begin{proposition}\label{crestere}
	If the family of projectors $P:\mathbb{R}_+\to\mathcal{B}(X)$ is compatible with  the evolution operator $U:\Delta\to\mathcal{B}(X)$, then the pair $(U,P)$  has $(h,k)$- growth if and only if there exists a nondecreasing map $M:\mathbb{R}_+\to[1,\infty)$ such that
	\begin{itemize}
		\item[($hg_1$')] $h(s)\|U(t,s)P(s)x\|\leq M(s)h(t) \|P(s)x\|$
		\item[($kg_2$')] $k(s)\|V(t,s)Q(t)x\|\leq M(t)k(t)\|Q(t)x\|$,
	\end{itemize}
	for all $(t,s,x)\in\Delta$, where $Q$ is the complementary family of $P$.	
\end{proposition}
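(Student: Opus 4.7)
The plan is to observe that condition $(hg_1)$ coincides with $(hg_1')$, so only the equivalence between $(kg_2)$ and $(kg_2')$ needs to be established, and this equivalence is a direct consequence of the properties $v_1),\ldots,v_4)$ of the isomorphism $V$ supplied by Remark \ref{proiectorstrong}. The approach mirrors that of Proposition \ref{strong-dichotomy}: translate between statements about $U(t,s)Q(s)x$ (which lives in $\operatorname{Ker} P(t)$) and statements about $V(t,s)Q(t)x$ (which lives in $\operatorname{Ker} P(s)$) by applying $V$ or $U$ respectively, using the fact that $U$ and $V$ are mutually inverse isomorphisms between these kernels.

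For the forward implication, I would assume $(kg_2)$ holds and, given $(t,s,x)\in\Delta$, set $y=V(t,s)Q(t)x$. Property $v_4)$ yields $Q(s)y=y$, while property $v_1)$ yields $U(t,s)y=Q(t)x$. Substituting $y$ in place of $x$ in $(kg_2)$ produces
\begin{equation*}
k(s)\|V(t,s)Q(t)x\|=k(s)\|Q(s)y\|\leq M(t)k(t)\|U(t,s)Q(s)y\|=M(t)k(t)\|Q(t)x\|,
\end{equation*}
which is exactly $(kg_2')$.

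For the converse, I would assume $(kg_2')$ and, given $(t,s,x)\in\Delta$, set $z=U(t,s)Q(s)x$. The invariance of the complementary family (Remark \ref{proiectori invarianti}) gives $Q(t)z=z$, and property $v_2)$ gives $V(t,s)z=Q(s)x$. Plugging $z$ into $(kg_2')$ in place of $x$ yields
\begin{equation*}
k(s)\|Q(s)x\|=k(s)\|V(t,s)Q(t)z\|\leq M(t)k(t)\|Q(t)z\|=M(t)k(t)\|U(t,s)Q(s)x\|,
\end{equation*}
which is $(kg_2)$. The same nondecreasing map $M$ works in both directions, so no adjustment of constants is needed.

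There is no real obstacle here; the only point requiring care is selecting the correct auxiliary vector ($y=V(t,s)Q(t)x$ in one direction, $z=U(t,s)Q(s)x$ in the other) so that the relations $v_1)$--$v_4)$ transform the hypothesis into the desired inequality without losing any factor. Invariance of $Q$ and the projector identity $Q^2=Q$ are used implicitly to guarantee $Q(s)y=y$ and $Q(t)z=z$, which keeps everything honest.
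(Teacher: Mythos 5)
Your proof is correct and follows essentially the same route as the paper: both reduce to the equivalence of $(kg_2)$ and $(kg_2')$ and translate between them via the properties $v_1)$--$v_4)$ of $V$ from Remark \ref{proiectorstrong}, with the same substitutions $y=V(t,s)Q(t)x$ and $z=U(t,s)Q(s)x$ that the paper uses implicitly. No gaps to report.
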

\begin{proof}
	\textit{Necessity:}
	
We assume that the pair $(U,P)$ has $(h,k)$- growth and by Definition \ref{growth} we have that there exists a nondecreasing function $M:\mathbb{R}_+\to[1,\infty)$ such that $(hg_1),(kg_2)$ take place. Since the inequality ($hg_1$')  is the same as $(hg_1)$ we only have to prove ($kg_2$'). By Remark \ref{proiectorstrong} we obtain:
\begin{align*}
\|V(t,s)Q(t)x\|&=\|Q(s)V(t,s)Q(s)x\|\leq M(t)\frac{k(t)}{k(s)}\|U(t,s)V(t,s)Q(t)x\|\\
&= M(t)\frac{k(t)}{k(s)}\|Q(t)x\|,
\end{align*}
for all $(t,s,x)\in \Delta\times X.$

\textit{Sufficiency}:

We assume that there exits a nondecreasing function $M$ such that the inequalities $(hg_1'),(kg_2')$ take place. By Remark \ref{proiectorstrong} we have that:
\begin{align*}
\|Q(s)x\|&=\|V(t,s)U(t,s)Q(s)x\|=\|V(t,s)Q(t)U(t,s)Q(s)x\|\\
&\leq M(t)\frac{k(t)}{k(s)}\|Q(t)U(t,s)Q(s)x\|= M(t)\frac{k(t)}{k(s)}\|U(t,s)Q(s)x\|,
\end{align*}
for all $(t,s,x)\in \Delta\times X.$
\end{proof}
\section{ Main results}
The aim of this section is to obtain two characterizations of the concept of $(h,k)$-dichotomy, using  Lyapunov type families of norms.

Let $U:\Delta\to\mathcal{B}(X)$ be an evolution operator on $X$,  $h,k:\mathbb{R}_+\to[1,\infty)$ two growth rates  and we consider a family of projectors $P$ invariant for the evolution operator $U$.

 We introduce the concept of a Lyapunov type family of norms $\mathcal{N}=\{\|\cdot\|_t,t\geq 0\}$  compatible with the family of projectors $P $.
\begin{definition}
	A family of norms $\mathcal{N}=\{\|\cdot\|_t,t\geq 0\}$ is called \textit{compatible} with the family of projectors $P $ if there exists a nondecreasing map $N:\mathbb{R}_+\to[1,\infty)$ such that 
	\begin{equation}\label{normprop}
	\|x\|\leq \|x\|_t\leq N(t)(\|P(t)x\|+\|Q(t)x\|),
	\end{equation}
	for all $(t,x)\in\mathbb{R}_+\times X, $ where $Q$ is the complementary family of $P$.
\end{definition}
\begin{remark}
	Since $P,Q:\mathbb{R}_+\to\mathcal{B}(X)$ are complementary, replacing $x$ by $P(t)x$ respectively  by $Q(t)x$ in (\ref{normprop}), for all $t\geq 0$ we obtain that, if a family of norms $\mathcal{N}=\{\|\cdot\|_t,t\geq 0\}$ is compatible with the family of projectors $P$ then we have:
	\begin{align}
	\|P(t)x\|&\leq \|P(t)x\|_t\leq N(t)\|P(t)x\|\label{rel1}\\
	\|Q(t)x\|&\leq \|Q(t)x\|_t\leq N(t)\|Q(t)x\|\label{rel2},
	\end{align}
	for all $(t,x)\in\mathbb{R}_+\times X.$
\end{remark}
In the following we will give two examples of families of norms compatible with the family of projectors $P $.
\begin{example}\label{norma-crestere}
If the pair $(U,P)$ has $(h,k)$-growth then the following Lyapunov type family of norms $\mathcal{N}=\{\|\cdot\|_t,t\geq 0\}$:
 \begin{equation}\label{norm-crestere}
 \|x\|_t=\sup_{\tau\geq t}\frac{h(t)}{h(\tau)}\|U(\tau,t)P(t)x\|+\sup_{\tau\leq t}\frac{k(\tau)}{k(t)}\|V(t,\tau)Q(t)x\|,
 \end{equation}
 for all $(t,x)\in \mathbb{R}_+\times X$, is compatible with the family of projectors $P $.
 
 Indeed,  taking $\tau=t$ in (\ref{norm-crestere})  we have that:
 \begin{align*}
 \|x\|_t\geq \|P(t)x\|+\|Q(t)x\|\geq \|P(t)x+Q(t)x\|=\|x\|
 \end{align*}
 For the right side of (\ref{normprop}), we have from Definition \ref{crestere}, that there exists a nondecreasing function $M:\mathbb{R}_+\to\mathcal{B}(X)$ such that $(hg_1),(kg_2)$ take place and by relations (\ref{normaP}), (\ref{normQ}) we obtain:
 \begin{align*}
 \|P(t)x\|_t=\sup_{\tau\geq t}\frac{h(t)}{h(\tau)}\|U(\tau,t)P(t)x\|&\leq M(t)\|P(t)x\|,
 \end{align*}
 and
 \begin{align*}
 \|Q(t)x\|_t=\sup_{\tau\leq t}\frac{k(\tau)}{k(t)}\|V(t,\tau)Q(t)x\|&\leq M(t)\|Q(t)x\|,
 \end{align*}
 Summing the previous identities we have that:
 \begin{align*}
 \|x\|_t=\|P(t)x+Q(t)x\|_t\leq\|P(t)x\|_t+\|Q(t)x\|_t\leq M(t)(\|P(t)x\|+\|Q(t)x\|)
 \end{align*}
 In consequence, we obtain that there exists a nondecreasing function $N:\mathbb{R}_+\to[1,\infty)$, given by $N(t)=M(t)$ such that (\ref{normprop}) take place.
\end{example}

\begin{remark}
	Replacing $x$ by $P(t)x$ and respectively by $Q(t)x$ in (\ref{norm-crestere}) and by the fact that the families of projectors $P,Q$ are complementary, we obtain that the family of norms $\mathcal{N}=\{\|\cdot\|_t,t\geq 0\}$ satisfies:
	\begin{align}
	\|P(t)x\|_t&=\sup_{\tau\geq t}\frac{h(t)}{h(\tau)}\|U(\tau,t)P(t)x\|\label{normaP}\\
	\|Q(t)x\|_t&=\sup_{\tau\leq t}\frac{k(\tau)}{k(t)}\|V(t,\tau)Q(t)x\|\label{normQ},
	\end{align}
for all $(t,x)\in\mathbb{R}_+\times X$.	
\end{remark}
\begin{example}\label{norma-dichotomie}
If the pair $(U,P)$ is $(h,k)$- dichotomic then the family of norms $\mathcal{N}_1=\{\||\cdot\||_t, t\geq0\}$ given by :
\begin{equation}\label{norm}
\||x\||_t=\sup_{\tau\geq t}\frac{h(\tau)}{h(t)}\|U(\tau,t)P(t)x\|+\sup_{\tau\leq t}\frac{k(t)}{k(\tau)}\|V(t,\tau)Q(t)x\|
\end{equation}
for all $(t,x)\in\mathbb{R}_+\times X$, 
is compatible with the family of projectors $P.$

	In order to obtain the left side of (\ref{normprop}) we take $\tau=t$ in (\ref{norm}) and we have that:
	\begin{align*}
	\||x\||_t\geq \|P(t)x\|+\|Q(t)x\|\geq \|P(t)x+Q(t)x\|=\|x\|
	\end{align*}
	For the right side of (\ref{normprop}) we have from Proposition \ref{strong-dichotomy}, that there exists a nondecreasing function $N:\mathbb{R}_+\to\mathcal{B}(X)$  such that $(hd_1'),(kd_2')$ take place and by relations (\ref{proiector1}), (\ref{proiector2}) we obtain:
	\begin{align*}
	\||P(t)x\||_t=\sup_{\tau\geq t}\frac{h(\tau)}{h(t)}\|U(\tau,t)P(t)x\|&\leq N(t)\|P(t)x\|,
	\end{align*}
	and
	\begin{align*}
	\||Q(t)x\||_t=\sup_{\tau\leq t}\frac{k(t)}{k(\tau)}\|V(t,\tau)Q(t)x\|&\leq N(t)\|Q(t)x\|,
	\end{align*}
	From the previous identities we have that:
	\begin{align*}
	\||x\||_t=\||P(t)x+Q(t)x\||_t\leq\||P(t)x\||_t+\||Q(t)x\||_t\leq N(t)(\|P(t)x\|+\|Q(t)x\|).
	\end{align*}
\end{example}
\begin{remark}
		Since the families of projectors $P,Q$ are complementary, replacing $x$ by $P(t)x$ and respectively by $Q(t)x$ in (\ref{norm}) we obtain that the family of norms $\mathcal{N}_1$ satisfies the following identities:
		\begin{align}
		\||P(t)x\||_t&=\sup_{\tau\geq t}\frac{h(\tau)}{h(t)}\|U(\tau,t)P(t)x\|\label{proiector1}\\
		\||Q(t)x\||_t&=\sup_{\tau\leq t}\frac{k(t)}{k(\tau)}\|V(t,\tau)Q(t)x\|\label{proiector2},
		\end{align}
		for all $(t,x)\in\mathbb{R}_+\times X.$
\end{remark}
Our first result establishes the equivalence between the notions of  nonuniform and a certain type of uniform -$(h,k)$- dichotomy with respect to a Lyapunov type family of norms.
\begin{theorem}\label{unif=neunif}Let $P:\mathbb{R}_+\to\mathcal{B}(X)$ be a family of projectors compatible with $U$.
	The pair $(U,P)$ is  $(h,k)$-dichotomic if and only	if there exists a family of norms $\mathcal{N}_1=\{\||\cdot\||_t, t\geq0\}$ compatible with the family of projectors $P $ such that the following inequalities take place:
	\begin{itemize}
		\item[($hd_1"$)]$h(t)\||U(t,s)P(s)x\||_t\leq h(s)\||P(s)x\||_s$
		\item[($kd_2"$)]$k(t)\||V(t,s)Q(t)x\||_s\leq k(s)\||Q(t)x\||_t$,
	\end{itemize}
	for all $(t,s,x)\in\Delta\times X$.
\end{theorem}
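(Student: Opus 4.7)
The plan is to take the candidate family of norms $\mathcal{N}_1$ from Example \ref{norma-dichotomie} as the witness for necessity, and for sufficiency to squeeze the usual norm between the two sides of (\ref{normprop}) together with the hypotheses $(hd_1'')$ and $(kd_2'')$.

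\textbf{Necessity.} Assume $(U,P)$ is $(h,k)$-dichotomic. I would define $\||\cdot\||_t$ by (\ref{norm}); by Example \ref{norma-dichotomie} this family is compatible with $P$, and by (\ref{proiector1}), (\ref{proiector2}) the $P$- and $Q$-components reduce to single suprema. To verify $(hd_1'')$, I would first observe that $U(t,s)P(s)x$ already lies in $\operatorname{Range}P(t)$ (by invariance), so using (\ref{proiector1}) and the cocycle property $(e_2)$,
\begin{equation*}
h(t)\||U(t,s)P(s)x\||_t=\sup_{\tau\geq t}h(\tau)\|U(\tau,s)P(s)x\|\leq\sup_{\tau\geq s}h(\tau)\|U(\tau,s)P(s)x\|=h(s)\||P(s)x\||_s,
\end{equation*}
the inequality being simply that the supremum over the smaller set $\{\tau\geq t\}$ is bounded by the supremum over $\{\tau\geq s\}$. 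Similarly, for $(kd_2'')$, since $V(t,s)Q(t)x$ lies in $\operatorname{Range}Q(s)$ by $v_4)$, using (\ref{proiector2}) and the cocycle identity $V(t,\tau)=V(s,\tau)V(t,s)$ from $v_3)$,
\begin{equation*}
k(t)\||V(t,s)Q(t)x\||_s=\sup_{\tau\leq s}\frac{k(t)k(s)}{k(\tau)}\|V(t,\tau)Q(t)x\|\leq k(s)\sup_{\tau\leq t}\frac{k(t)}{k(\tau)}\|V(t,\tau)Q(t)x\|=k(s)\||Q(t)x\||_t.
\end{equation*}

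\textbf{Sufficiency.} Conversely, assume $\mathcal{N}_1$ is compatible with $P$ and that $(hd_1'')$, $(kd_2'')$ hold, with compatibility constant $N(\cdot)$ from (\ref{rel1}), (\ref{rel2}). For the stable inequality, since $U(t,s)P(s)x=P(t)U(t,s)P(s)x$, I can apply (\ref{rel1}) followed by $(hd_1'')$ and then (\ref{rel1}) again:
\begin{equation*}
h(t)\|U(t,s)P(s)x\|\leq h(t)\||U(t,s)P(s)x\||_t\leq h(s)\||P(s)x\||_s\leq N(s)h(s)\|P(s)x\|,
\end{equation*}
which is exactly $(hd_1)$. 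For the unstable inequality, $V(t,s)Q(t)x=Q(s)V(t,s)Q(t)x$ by $v_4)$, so using (\ref{rel2}), $(kd_2'')$ and (\ref{rel2}) in that order,
\begin{equation*}
k(t)\|V(t,s)Q(t)x\|\leq k(t)\||V(t,s)Q(t)x\||_s\leq k(s)\||Q(t)x\||_t\leq N(t)k(s)\|Q(t)x\|,
\end{equation*}
which is $(kd_2')$ from Proposition \ref{strong-dichotomy}; invoking that proposition (applicable since $P$ is compatible with $U$) converts this into $(kd_2)$.

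The only step requiring real care is the manipulation of the suprema in necessity: one must confirm that when the base point moves from $s$ to $t\geq s$ (or from $t$ down to $s\leq t$), the index sets shrink in the right direction so that the change of base point becomes an inequality rather than a rescaling. This hinges on using $(e_2)$ for $U$ together with $v_3)$ for $V$, and on the invariance properties $v_1)$–$v_4)$ ensuring that conjugating by a projector leaves the vectors untouched. Everything else is an application of the compatibility bounds (\ref{rel1}), (\ref{rel2}) and Proposition \ref{strong-dichotomy}.
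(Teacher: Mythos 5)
Your proposal is correct and follows essentially the same route as the paper: for necessity you use the norms of Example \ref{norma-dichotomie} together with (\ref{proiector1}), (\ref{proiector2}), the cocycle identities and the inclusion of the index sets $[t,\infty)\subseteq[s,\infty)$, $[0,s]\subseteq[0,t]$; for sufficiency you squeeze via (\ref{rel1}), (\ref{rel2}) to recover $(hd_1')$, $(kd_2')$ and invoke Proposition \ref{strong-dichotomy}. Your explicit appeal to Proposition \ref{strong-dichotomy} at the end merely makes precise a step the paper leaves implicit.
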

\begin{proof} 
	\textit{Necessity:}
	
	We assume that the pair $(U,P)$ is $(h,k)$- dichotomic. In this case, by Example \ref{norma-dichotomie} we have that there exists a family of norms $\mathcal{N}_1=\{\||x\||_t,t\geq 0\}$, given by (\ref{norm}) compatible with the family of projectors $P $. We only have to prove  the ($hd_1"$),($kd_2"$) inequalities. In order to do that, we use the relations (\ref{proiector1}), (\ref{proiector2}) and  the fact that $[t,\infty)\subseteq[s,\infty)$ and $[0,s]\subseteq[0,t]$, for all $(t,s)\in\Delta.$
	\begin{align*}
	\||U(t,s)P(s)x\||_t&=\||P(t)U(t,s)P(s)x\||_t=\sup_{\tau\geq t}\frac{h(\tau)}{h(t)}\|U(\tau,t)P(t)U(t,s)P(s)x\|\\
	&\leq \sup_{\tau\geq s}\frac{h(\tau)}{h(t)}\|U(\tau,s)P(s)x\|\\
	&=\frac{h(s)}{h(t)} \sup_{\tau\geq s}\frac{h(\tau)}{h(s)}\|U(\tau,s)P(s)x\|\\
	&=\frac{h(s)}{h(t)}\||P(s)x\||_s
	\end{align*}
	\begin{align*}
	\||V(t,s)Q(t)x\||_s&=\||Q(s)V(t,s)Q(t)x\||_s=\sup_{\tau\leq s} \frac{k(s)}{k(\tau)}\|V(s,\tau)Q(s)V(t,s)Q(t)x\|\\
	&\leq \sup_{\tau\leq t}\frac{k(s)}{k(\tau)}\|V(t,\tau)Q(t)x\|=\frac{k(s)}{k(t)}\sup_{\tau\leq t}\frac{k(t)}{k(\tau)}\|V(t,\tau)Q(t)x\|\\
	&=\frac{k(s)}{k(t)}\||Q(t)x\||_t
	\end{align*}
	for all $(t,s,x)\in\Delta\times X.$
	
	\textit{Sufficiency}:
	
	We assume that there exists a Lyapunov type family of norms $\mathcal{N}_1=\{\||\cdot\||_t, t\geq0 \}$, compatible with the family of projectors $P$ such that ($hd_1"$),($kd_2"$) take place.
	
	The implication 
	($hd_1"$)$\Rightarrow(hd_1$') yields by (\ref{rel1}).
	\begin{align*}
	\|U(t,s)P(s)x\|&\leq \||U(t,s)P(s)x\||_t\leq  \frac{h(s)}{h(t)}\||P(s)x\||_s\\
	&\leq N(s)\frac{h(s)}{h(t)}\|P(s)x\|
	\end{align*}
	for all $(t,s,x)\in\Delta\times X$.\\
	Similarly, for the implication  
	($kd_2"$)$\Rightarrow(kd_2$') we have:
	\begin{align*}
	\|V(t,s)Q(t)x\|&\leq\||V(t,s)Q(t)x\||_s\leq \frac{k(s)}{k(t)}\||Q(t)x\||_t\\
	&\leq N(t)\frac{k(s)}{k(t)}\|Q(t)x\| 
	\end{align*}
	for all $(t,s,x)\in\Delta\times X.$ 
	In conclusion, the pair $(U,P)$ is $(h,k)$-dichotomic.
\end{proof}

In the particular case, when $h(t)=e^{\alpha(t)}$ and $k(t)=e^{\beta t}$ for all $t\geq 0$ and $\alpha,\beta>0$, we obtain the following characterization of  exponential dichotomy.
\begin{corollary}\label{unif=neunif-exp}Let $P:\mathbb{R}_+\to\mathcal{B}(X)$ be a  family of projectors compatible with $U$.
	The pair $(U,P)$ is exponentially dichotomic if and only	if there exist a family of norms $\mathcal{N}_1=\{\||\cdot\||_t, t\geq0\}$ compatible with the family of projectors $P $ and two positive constants $\alpha,\beta$ such that the following inequalities take place:
	\begin{itemize}
		\item[($ed_1$)]$\||U(t,s)P(s)x\||_t\leq e^{-\alpha(t-s)}\||P(s)x\||_s$
		\item[($ed_2$)]$\||V(t,s)Q(t)x\||_s\leq e^{-\beta(t-s)}\||Q(t)x\||_t$,
	\end{itemize}
	for all $(t,s,x)\in\Delta\times X$.	
\end{corollary}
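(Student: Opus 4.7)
The plan is to derive this corollary as a direct specialization of Theorem \ref{unif=neunif} to the exponential case, which was already identified in the remark following Definition \ref{dicho} as the choice $h(t)=e^{\alpha t}$, $k(t)=e^{\beta t}$ with $\alpha,\beta>0$. No new argument is needed beyond substitution and simplification of the resulting ratios.

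First I would verify that the exponentials $h(t)=e^{\alpha t}$ and $k(t)=e^{\beta t}$ are genuine growth rates in the sense of the paper: both are nondecreasing (since $\alpha,\beta>0$), both map $\mathbb{R}_+$ into $[1,\infty)$ (since their value at $0$ is $1$), and both tend to $\infty$. Hence Theorem \ref{unif=neunif} applies verbatim to the pair $(U,P)$ with this choice of growth rates.

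Next I would specialize the equivalent inequalities $(hd_1")$ and $(kd_2")$ of Theorem \ref{unif=neunif} to the exponential setting. Multiplying $(hd_1")$ by $e^{-\alpha t}$ and using $h(s)/h(t)=e^{-\alpha(t-s)}$ transforms it into $(ed_1)$; similarly, $(kd_2")$ combined with $k(s)/k(t)=e^{-\beta(t-s)}$ yields $(ed_2)$. The converse substitution is identical. Thus the two systems of inequalities are equivalent, and the corollary follows immediately from the equivalence established in Theorem \ref{unif=neunif}.

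There is essentially no obstacle here, since the corollary is a routine specialization. The only point requiring a moment's attention is ensuring that the change from the factors $h(t)\||\cdot\||_t$ and $k(t)\||\cdot\||_s$ in $(hd_1")$, $(kd_2")$ to the multiplicative factor $e^{-\alpha(t-s)}$, $e^{-\beta(t-s)}$ on the right-hand side of $(ed_1)$, $(ed_2)$ is made symmetrically on both sides of each inequality, which is automatic since $h$ and $k$ depend only on a single time variable.
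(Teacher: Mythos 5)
Your proof is correct and takes exactly the route the paper intends: the corollary is stated as an immediate specialization of Theorem \ref{unif=neunif} with $h(t)=e^{\alpha t}$, $k(t)=e^{\beta t}$, and dividing $(hd_1")$ by $h(t)$ and $(kd_2")$ by $k(t)$ gives $(ed_1)$ and $(ed_2)$ with the factors $e^{-\alpha(t-s)}$, $e^{-\beta(t-s)}$, the converse substitution being identical. Nothing further is needed.
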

When the growth rates are of polynomial type, $h(t)=(t+1)^\alpha,k(t)=(t+1)^\beta$, for all $t\geq 0$ and $\alpha,\beta>0$, we obtain a characterization of nonuniform polynomial dichotomy in terms of uniform polynomial dichotomy with respect to a Lyapunov type family of norms.
\begin{corollary}\label{unif=neunif-pol}Let $P:\mathbb{R}_+\to\mathcal{B}(X)$ be a family of projectors compatible with  $U$.
	The pair $(U,P)$ is  polynomially dichotomic if and only	if there exist a family of norms $\mathcal{N}_1=\{\||\cdot\||_t, t\geq0\}$ compatible with the family of projectors $P $ and two positive constants $\alpha,\beta$ such that the following inequalities take place:
	\begin{itemize}
		\item[($pd_1$)]$(t+1)^\alpha\||U(t,s)P(s)x\||_t\leq (s+1)^\alpha\||P(s)x\||_s$
		\item[($pd_2$)]$(t+1)^\beta\||V(t,s)Q(t)x\||_s\leq (s+1)^\beta\||Q(t)x\||_t$,
	\end{itemize}
	for all $(t,s,x)\in\Delta\times X$.
\end{corollary}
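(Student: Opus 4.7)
The plan is to derive this corollary as an immediate specialization of Theorem \ref{unif=neunif} to polynomial growth rates. Recall from the remark following Definition \ref{dicho} that polynomial dichotomy is, by definition, nonuniform $(h,k)$-dichotomy with $h(t)=(t+1)^\alpha$ and $k(t)=(t+1)^\beta$ for some positive constants $\alpha,\beta$. Since $P$ is assumed compatible with $U$, Theorem \ref{unif=neunif} applies and furnishes the equivalence between polynomial dichotomy and the existence of a compatible Lyapunov type family of norms $\mathcal{N}_1$ satisfying the inequalities $(hd_1'')$ and $(kd_2'')$ written for these specific $h$ and $k$.

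For the necessity direction, assuming polynomial dichotomy, Theorem \ref{unif=neunif} yields the family $\mathcal{N}_1$ and the inequalities $(hd_1''),(kd_2'')$; substituting $h(t)=(t+1)^\alpha$ and $k(t)=(t+1)^\beta$ rewrites them verbatim as $(pd_1)$ and $(pd_2)$. For the sufficiency direction, starting from a compatible family $\mathcal{N}_1$ satisfying $(pd_1)$ and $(pd_2)$, I would read these as $(hd_1'')$ and $(kd_2'')$ with the polynomial growth rates, and Theorem \ref{unif=neunif} returns the corresponding $(h,k)$-dichotomy, which is polynomial dichotomy by the remark cited above.

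There is no substantive obstacle here: the argument is a direct symbolic substitution of the growth rates in the statement of Theorem \ref{unif=neunif}. The only point worth flagging is that the two positive constants $\alpha,\beta$ in the corollary come from the definition of polynomial dichotomy itself, so they enter as parameters transported through the equivalence rather than objects one needs to construct during the proof.
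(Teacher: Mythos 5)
Your proposal is correct and matches the paper's approach: the paper states this corollary without a separate proof, precisely as the direct specialization of Theorem \ref{unif=neunif} to the growth rates $h(t)=(t+1)^\alpha$, $k(t)=(t+1)^\beta$, exactly as you argue in both directions.
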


Our second result is a characterization of the concept of $(h,k)$-dichotomy with respect to a Lyapunov type  family of norms.
\begin{theorem}Let $P:\mathbb{R}_+\to\mathcal{B}(X)$ be a family of projectors compatible with $U$ and the pair $(U,P)$ has $(h,k)$- growth.
	The pair $(U,P)$ is $(h,k)$-dichotomic if and only if there exist a family of norms $\mathcal{N}=\{\|\cdot\|_t, t\geq0\}$ compatible with $P$ and a nondecreasing function $N:\mathbb{R}_+\to[1,\infty)$ such that the following inequalities take place:
	\begin{itemize}
		\item[($hd_1$''')]$h(t)\|U(t,s)P(s)x\|_t\leq N(s)h(s)\|P(s)x\|_s$
		\item[($kd_2$''')]$k(t)\|V(t,s)Q(t)x\|_s\leq N(t)k(s)\|Q(t)x\|_t$,
	\end{itemize}
	for all $(t,s,x)\in\Delta\times X$.
\end{theorem}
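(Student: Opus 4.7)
The plan is to use the Lyapunov type family of norms $\mathcal{N}=\{\|\cdot\|_t\}$ supplied by Example \ref{norma-crestere}, which is available precisely because the pair $(U,P)$ is assumed to have $(h,k)$-growth, and to verify the two labelled inequalities directly. For the necessity of $(hd_1''')$, invariance of $P$ gives $P(t)U(t,s)P(s)x=U(t,s)P(s)x$, so identity (\ref{normaP}) rewrites as
\[
\|U(t,s)P(s)x\|_t=\sup_{\tau\geq t}\frac{h(t)}{h(\tau)}\|U(\tau,s)P(s)x\|.
\]
Applying $(hd_1)$ to $U(\tau,s)P(s)x$ and using $h(\tau)\geq h(t)$ for $\tau\geq t$, the supremum is bounded by $N(s)\frac{h(s)}{h(t)}\|P(s)x\|$; invoking (\ref{rel1}) on the right and multiplying by $h(t)$ yields $(hd_1''')$.

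For $(kd_2''')$ the manoeuvre is parallel but uses the isomorphism $V$. Property $v_4$ from Remark \ref{proiectorstrong} lets one insert $Q(s)$, and then $v_3$ fuses $V(s,\tau)V(t,s)=V(t,\tau)$, so identity (\ref{normQ}) becomes
\[
\|V(t,s)Q(t)x\|_s=\sup_{\tau\leq s}\frac{k(\tau)}{k(s)}\|V(t,\tau)Q(t)x\|.
\]
Proposition \ref{strong-dichotomy} — available because $P$ is compatible with $U$ — supplies $(kd_2')$, bounding $\|V(t,\tau)Q(t)x\|$ by $N(t)\frac{k(\tau)}{k(t)}\|Q(t)x\|$. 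Using $\tau\leq s$ to estimate the supremum by $N(t)\frac{k(s)}{k(t)}\|Q(t)x\|$, and then (\ref{rel2}), completes $(kd_2''')$ after multiplication by $k(t)$.

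For sufficiency I would sandwich the ambient norm between the Lyapunov norm via the compatibility inequalities (\ref{rel1}) and (\ref{rel2}). From $(hd_1''')$,
\[
h(t)\|U(t,s)P(s)x\|\leq h(t)\|U(t,s)P(s)x\|_t\leq N(s)h(s)\|P(s)x\|_s\leq N(s)\widetilde N(s)h(s)\|P(s)x\|,
\]
where $\widetilde N$ is the nondecreasing function from the definition of compatibility; this is $(hd_1)$. The analogous chain applied to $(kd_2''')$ yields $(kd_2')$, and Proposition \ref{strong-dichotomy} converts the pair into $(h,k)$-dichotomy.

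The main difficulty I expect is keeping the cocycle bookkeeping for $V$ tidy in the $Q$-direction of the necessity step, in particular the simplification $V(s,\tau)Q(s)V(t,s)Q(t)x=V(t,\tau)Q(t)x$; once that is in hand, every remaining estimate reduces to the $(h,k)$-dichotomy bounds combined with monotonicity of $h$ and $k$.
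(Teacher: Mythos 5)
Your proposal is correct and follows essentially the same route as the paper: the necessity uses the norms of Example \ref{norma-crestere} together with $(hd_1)$ and, via Proposition \ref{strong-dichotomy}, $(kd_2')$ inside the suprema (plus monotonicity of $h,k$ and the cocycle identities $v_3$, $v_4$), and the sufficiency sandwiches the original norm using the compatibility inequalities (\ref{rel1}), (\ref{rel2}) and concludes with Proposition \ref{strong-dichotomy}. The only differences are cosmetic (e.g.\ collapsing $U(\tau,t)U(t,s)$ to $U(\tau,s)$ before estimating, and naming the compatibility constant $\widetilde N$ where the paper writes $M$).
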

\begin{proof}
\textit{Necessity}:

We assume that the pair $(U,P)$ is $(h,k)$-dichotomic and we have by Remark \ref{dicho-crestere} that the pair $(U,P)$ has $(h,k)$- growth and by Example \ref{norma-crestere}, there exists a  family of norms $\mathcal{N}=\{\|\cdot\|_t, t\geq0\}$, given by (\ref{norm-crestere}) compatible with $P$.

From Proposition \ref{strong-dichotomy} we obtain that there exits a nondecreasing function $N:\mathbb{R}_+\to[1,\infty)$ such that $(hd_1')$ and $(hd_2')$ take place.
	By  the fact that the family of norms $\mathcal{N}$ is compatible with the family of projectors $P $ we have:
	\begin{align*}
	\|U(t,s)P(s)x\|_t&=\|P(t)U(t,s)P(s)x\|_t=\sup_{\tau\geq t}\frac{h(t)}{h(\tau)}\|U(\tau,t)P(t)U(t,s)P(s)x\|\\
	&\leq \sup_{\tau\geq t}\frac{h(t)}{h(\tau)}N(s)\frac{h(s)}{h(\tau)}\|P(s)x\|= \sup_{\tau\geq t}\left( \frac{h(t)}{h(\tau)}\right) ^2N(s)\frac{h(s)}{h(t)} \|P(s)x\|\\
	&\leq N(s)\frac{h(s)}{h(t)} \|P(s)x\|\\
	&\leq N(s)\frac{h(s)}{h(t)}\|P(s)x\|_s
	\end{align*}
	and
	\begin{align*}
	\|V(t,s)Q(t)x\|_s&=\|Q(s)V(t,s)Q(t)x\|_s=\sup_{\tau\leq s} \frac{k(s)}{k(\tau)}\|V(s,\tau)Q(s)V(t,s)Q(t)x\|\\
	&\leq \sup_{\tau\leq s}\frac{k(s)}{k(\tau)}N(t)\frac{k(\tau)}{k(t)}\|Q(t)x\|=\sup_{\tau\leq s}N(t)\frac{k(s)}{k(t)}\|Q(t)x\|\\
	&\leq N(t)\frac{k(s)}{k(t)}\|Q(t)x\|_t
	\end{align*}
	for all $(t,s,x)\in \Delta\times X.$

	\textit{Sufficiency}:
	
	We assume that there exist a family of norms $\mathcal{N}=\{\|\cdot\|_t,t\geq 0\}$ compatible with $P$ and a nondecreasing function $N:\mathbb{R}_+\to[1,\infty)$ such that the inequalities ($hd_1$'''),($kd_2$''') take place. By relation (\ref{rel1}) and (\ref{rel2}) we have:
	\begin{align*}
	\|U(t,s)P(s)x\|&=\|P(t)U(t,s)P(s)x\|\leq \|U(t,s)P(s)x\|_t\\
	&\leq N(s)\frac{h(s)}{h(t)}\|P(s)x\|_s\leq N(s)M(s)\frac{h(s)}{h(t)}\|P(s)x\|
	\end{align*}
	and
	\begin{align*}
	\|V(t,s)Q(t)x\|&=\|Q(s)V(t,s)Q(t)x\|\leq \|V(t,s)Q(t)x\|_s\\
	&\leq N(t)\frac{k(s)}{k(t)}\|Q(t)x\|_t \leq N(t)M(t)\frac{k(s)}{k(t)}\|Q(t)x\|
	\end{align*}
	for all $(t,s,x)\in\Delta\times X$. In conclusion, we obtain that there exists a nondecreasing function $N_1:\mathbb{R}_+\to[1,\infty)$ given by $N_1(t)=N(t)M(t)$ for all $t\geq 0$, such that inequalities $(hd_1')$ and $(hd_2')$ take place. Thus, the pair $(U,P)$ is $(h,k)$- dichotomic.
\end{proof}
In the particular cases, when the growth rates $h,k:\mathbb{R}_+\to[1,\infty)$ are of exponential and polynomial type, we obtain the following characterizations of  exponential respectively, polynomial dichotomy, in terms of Lyapunov type families of norms.
\begin{corollary}Let $P:\mathbb{R}_+\to\mathcal{B}(X)$ be a family of projectors compatible with $U$ and the pair $(U,P)$ has exponential growth.
	The pair $(U,P)$ is exponentially dichotomic if and only if there exist a family of norms $\mathcal{N}=\{\|\cdot\|_t,t\geq 0\}$ compatible with $P$, a nondecreasing function $N:\mathbb{R}_+\to[1,\infty)$ and two positive constants $\alpha,\beta$, such that the following inequalities take place:
	\begin{itemize}
		\item[$(ed_1')$]$\|U(t,s)P(s)x\|_t\leq N(s)e^{-\alpha(t-s)}\|P(s)x\|_s$
		\item[$(ed_2')$]$\|V(t,s)Q(t)x\|_s\leq N(t)e^{-\beta(t-s)}\|Q(t)x\|_t$,
	\end{itemize}
	for all $(t,s,x)\in\Delta\times X$.
\end{corollary}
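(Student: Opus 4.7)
The plan is to derive this corollary as an immediate specialization of the preceding theorem by taking $h(t)=e^{\alpha t}$ and $k(t)=e^{\beta t}$ for positive constants $\alpha,\beta$, which are indeed growth rates in the sense of the paper.

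First, I would observe that under this choice of $h$ and $k$, the notion of $(h,k)$-dichotomy becomes exactly exponential dichotomy and the notion of $(h,k)$-growth becomes exactly exponential growth (by the remarks following Definitions \ref{dicho} and \ref{growth}). Hence the standing hypothesis of the preceding theorem (that $(U,P)$ has $(h,k)$-growth with $P$ compatible with $U$) translates verbatim into the standing hypothesis of the corollary (that $(U,P)$ has exponential growth with $P$ compatible with $U$), and the equivalent conclusion that $(U,P)$ is $(h,k)$-dichotomic becomes precisely that $(U,P)$ is exponentially dichotomic.

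Second, I would rewrite inequalities $(hd_1''')$ and $(kd_2''')$ with these exponential rates. They become
\begin{align*}
e^{\alpha t}\|U(t,s)P(s)x\|_t &\leq N(s)\,e^{\alpha s}\|P(s)x\|_s,\\
e^{\beta t}\|V(t,s)Q(t)x\|_s &\leq N(t)\,e^{\beta s}\|Q(t)x\|_t,
\end{align*}
for all $(t,s,x)\in\Delta\times X$. Dividing the first inequality by $e^{\alpha t}$ yields $(ed_1')$ and dividing the second by $e^{\beta t}$ yields $(ed_2')$. Since these manipulations are reversible (multiplying back by $e^{\alpha t}$ and $e^{\beta t}$), the pair $(ed_1'),(ed_2')$ is equivalent to the pair $(hd_1'''),(kd_2''')$ in this setting.

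There is essentially no obstacle here; the proof is purely a translation of notation via the substitution $h(t)=e^{\alpha t}$, $k(t)=e^{\beta t}$. The only care I would take is to keep track that all four correspondences (the two growth/dichotomy notions and the two inequalities) are stated as equivalences, so that the \emph{if and only if} character of the conclusion is preserved from the theorem to the corollary.
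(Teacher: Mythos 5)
Your proposal is correct and matches the paper's intent: the paper states this corollary without proof as an immediate specialization of the preceding theorem, obtained exactly by substituting $h(t)=e^{\alpha t}$, $k(t)=e^{\beta t}$ and dividing the resulting inequalities by $e^{\alpha t}$ and $e^{\beta t}$. Your translation of the hypotheses, conclusions and inequalities, together with the observation that the substitution is reversible, is precisely the argument intended.
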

\begin{corollary}
	Let $P:\mathbb{R}_+\to\mathcal{B}(X)$ be a family of projectors compatible with $U$ and the pair $(U,P)$ has polynomial growth.
	The pair $(U,P)$ is polynomially dichotomic if and only if there exist a family of norms  $\mathcal{N}=\{\|\cdot\|_t,t\geq 0\}$ compatible with $P$, a nondecreasing function $N:\mathbb{R}_+\to[1,\infty)$ and the positive constants $\alpha,\beta$, such that the following inequalities take place:
	\begin{itemize}
		\item[$(pd_1')$]$(t+1)^\alpha\|U(t,s)P(s)x\|_t\leq N(s)(s+1)^\alpha\|P(s)x\|_s$
		\item[$(pd_2')$]$(t+1)^\beta\|V(t,s)Q(t)x\|_s\leq N(t)(s+1)^\beta\|Q(t)x\|_t$,
	\end{itemize}
	for all $(t,s,x)\in\Delta\times X$.
\end{corollary}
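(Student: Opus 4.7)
The plan is to obtain this corollary as an immediate specialization of the preceding theorem (the second main characterization result) to the polynomial growth rates $h(t)=(t+1)^\alpha$ and $k(t)=(t+1)^\beta$ with $\alpha,\beta>0$. Thus no new machinery is needed; the argument is simply a translation step.

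First, I would verify that $h(t)=(t+1)^\alpha$ and $k(t)=(t+1)^\beta$ are growth rates in the sense of the paper: each is nondecreasing, takes values in $[1,\infty)$, and diverges as $t\to\infty$. By the remark following Definition \ref{dicho}, with this choice of $h$ and $k$ the notion of $(h,k)$-dichotomy coincides with polynomial dichotomy; similarly, by the remark following Definition \ref{growth}, $(h,k)$-growth coincides with polynomial growth. Hence the hypothesis that $(U,P)$ has polynomial growth is precisely the hypothesis of the previous theorem for this pair of growth rates, and polynomial dichotomy is precisely $(h,k)$-dichotomy in this setting.

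Next, I would invoke the previous theorem directly. Its statement asserts the equivalence between $(h,k)$-dichotomy and the existence of a family of norms $\mathcal{N}=\{\|\cdot\|_t,\,t\ge 0\}$ compatible with $P$ together with a nondecreasing $N:\mathbb{R}_+\to[1,\infty)$ satisfying $(hd_1''')$ and $(kd_2''')$. Substituting $h(t)=(t+1)^\alpha$ on both sides of $(hd_1''')$ yields exactly $(pd_1')$, and substituting $k(t)=(t+1)^\beta$ on both sides of $(kd_2''')$ yields exactly $(pd_2')$.

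The verification is entirely routine: there is no genuine obstacle, as both the necessity and sufficiency directions have already been established in the preceding theorem in full generality. The only point worth a brief mention is that the nondecreasing function $N$ and the family $\mathcal{N}$ produced (or required) in the corollary are the same objects produced (or required) by the theorem, so no additional compatibility condition has to be checked.
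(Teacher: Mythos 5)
Your proposal is correct and matches the paper's intent exactly: the corollary is stated as an immediate specialization of the preceding theorem to $h(t)=(t+1)^\alpha$, $k(t)=(t+1)^\beta$, and the paper itself offers no further argument beyond this substitution. Nothing is missing.
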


\end{document}